\documentclass[12pt,reqno]{article}

\usepackage[usenames]{color}
\usepackage{amssymb}
\usepackage{amsmath}
\usepackage{amsthm}
\usepackage{amsfonts}
\usepackage{amscd}
\usepackage{graphicx}
\usepackage{xcolor}
\usepackage{float}

\usepackage[colorlinks=true,
linkcolor=webgreen,
filecolor=webbrown,
citecolor=webgreen]{hyperref}

\definecolor{webgreen}{rgb}{0,.5,0}
\definecolor{webbrown}{rgb}{.6,0,0}

\usepackage{fullpage}

\usepackage{graphics}
\usepackage{latexsym}
\usepackage{epsf}
\usepackage{breakurl}

\begin{document}

\theoremstyle{plain}
\newtheorem{theorem}{Theorem}
\newtheorem{corollary}[theorem]{Corollary}
\newtheorem{lemma}[theorem]{Lemma}
\newtheorem{proposition}[theorem]{Proposition}

\theoremstyle{definition}
\newtheorem{definition}[theorem]{Definition}
\newtheorem{example}[theorem]{Example}
\newtheorem{conjecture}[theorem]{Conjecture}

\theoremstyle{remark}
\newtheorem{remark}[theorem]{Remark}

\author{Jeffrey Shallit\footnote{Research supported in part by NSERC grant 2024-03725.}\\
School of Computer Science\\
University of Waterloo\\
200 University Ave. W.\\
Waterloo, ON  N2L 3G1 \\
Canada \\
\href{mailto:shallit@uwaterloo.ca}{\tt shallit@uwaterloo.ca}}

\title{Ten Squares Force an Overlap}

\maketitle

\vskip .2 in
\begin{abstract}
We prove that every concatenation of $10$ or more binary squares contains an overlap.  The bound $10$ is best possible.  In contrast, over a ternary
alphabet, there are infinitely long overlap-free words that consist
of a concatenation of squares.
\end{abstract}

\section{Introduction}

We say that a word $w$ is a {\it square\/} if
$w = xx$ for some nonempty 
word $x$, and is an {\it overlap\/} if $w = axaxa$ for
some (possibly empty) word $x$ and a single letter $a$.  For example,
{\tt murmur} is a square, and {\tt alfalfa} is an overlap.  

Suppose $w = xyz$ for some words $x,y,z$.  We call $x$ a {\it prefix\/} of
$w$, $y$ a {\it factor\/} of $w$, and $z$ a {\it suffix\/} of $w$.
For example, for $w = {\tt concatenation}$, the word
{\tt con} is a prefix, {\tt ten} is a factor, and {\tt nation} is a suffix.
We call a word {\it squarefree\/} if it has no square factor, and
{\it overlap-free\/} if it has no factor that is an overlap.

In this paper we are mostly concerned with finite words over the
alphabet $\{0,1\}$.  We define $\overline{0} = 1$ and
$\overline{1} = 0$, and extend the notation for complement to words
in the obvious way.

The theory of overlap-free binary words was developed
initially by Axel Thue \cite{Thue:1912,Berstel:1995}.
Here an important role is played by the
{\it Thue-Morse morphism\/} $\mu$, defined by $\mu(0) = 01$ and
$\mu(1) = 10$.  We call $01$ and $10$ the {\it $\mu$-blocks}.
We also write $\mu^i$ for the $i$-fold iteration of $\mu$.

In particular, a fundamental result in the area
is the factorization theorem of Restivo and Salemi \cite{Restivo&Salemi:1985a,Restivo&Salemi:1985b},
which states that every sufficiently
long finite overlap-free binary word $w$ can be written
in the form $x \mu(y) z$, where $y$ is overlap-free, and $x,z$ are 
words of length at most two.  

In this paper we prove the following result.
\begin{theorem}
Every concatenation of $10$ or more binary squares contains a factor
that is an overlap, and the bound $10$ is optimal.
\label{main}
\end{theorem}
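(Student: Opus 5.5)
The plan has two parts: an explicit witness for optimality, and a mechanical or structural argument for the upper bound.

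\textbf{Optimality.} I would exhibit an explicit overlap-free binary word that is a concatenation of $9$ squares. I would find it by a simple backtracking search: extend a word square by square, and prune whenever an overlap appears.

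Overlap-free binary words contain only squares $xx$ with $|x| \in \{1,2\}\cup\{2^k,3\cdot 2^k\}$. Moreover, the squares are essentially conjugates of $\mu^k(0)\mu^k(0)$ or $\mu^k(010)\mu^k(010)$, by the Restivo--Salemi factorization theorem. So the search space at each step is small. The witness is then checked directly for overlaps.

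\textbf{Upper bound.} It suffices to show that no overlap-free word is a concatenation of exactly $10$ squares. This suffices because a prefix of an overlap-free word is overlap-free, and the first $10$ squares of any longer concatenation already form a concatenation of $10$ squares.

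\emph{Step 1: finite-state reduction.} I would encode the statement as a first-order formula about the Thue--Morse-like structure. One option is to use an automatic-sequence approach: every overlap-free binary word is a factor of a word in a set describable by an automaton, via the Restivo--Salemi/Fife parametrization of overlap-free words. The predicate ``$w[i..j]$ is a square'' is then expressible, and a decision procedure such as Walnut settles the nonexistence of a chain of $10$ consecutive square factors.

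\emph{Step 2: structural alternative.} If a human proof is preferred, I would argue by induction via desubstitution. Take an overlap-free $w = s_1 s_2\cdots s_n$ with each $s_i$ a square. By Restivo--Salemi, write $w = x\mu(y)z$ with $|x|,|z|\le 2$.

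Squares with period at least $3$ must have even period and lie aligned or anti-aligned with the $\mu$-block decomposition. Short squares ($00$, $11$, $0101$, $1010$, and the period-$3$ cases) can occur only at limited positions. Specifically, $00$ and $11$ straddle block boundaries, and two such squares cannot be adjacent without creating $000$ or an overlap. So I would show the following:

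\begin{enumerate}
\item The boundaries between consecutive squares of period at least $4$ fall at block-parity positions in a consistent way, and each such square is the $\mu$-image of a square in $y$, up to a shift by one.
\item Each short square consumes at least one unit of budget.
\end{enumerate}

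This gives a recursion: a concatenation of $n$ squares yields, after desubstitution, a concatenation of at least $n - c$ squares in $y$, or else bounds $n$ directly. Iterating until the word is short gives a finite check.

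\textbf{Main obstacle.} The hard step is the boundary bookkeeping in Step 2. A square can begin at an odd position relative to the $\mu$-block parsing, so its image under desubstitution is a conjugate shifted by a letter. Short squares also break the parsing. Getting the exact constant $10$ (rather than just some finite bound) from this case analysis is delicate.

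For this reason, I would first try the mechanical route of Step 1: run a computer search over all overlap-free concatenations of squares. This search terminates because the tree of extensions is finite: every branch dies by depth $10$. The termination argument is that overlap-free words of bounded ``square count'' have bounded length, once each square's length is controlled. I expect the delicate point to be proving that long squares cannot make the tree infinite. I would handle this with the desubstitution argument above, which shows that a large square forces the word to be a $\mu^k$-image, reducing to a smaller instance.
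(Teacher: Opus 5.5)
Your proposal does not prove the upper bound yet: both routes stop where the real work begins.

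\textbf{Step 1.} It rests on a parametrization that cannot capture this problem. Fife-type descriptions range over infinite overlap-free words, but the extremal words here are not factors of any longer overlap-free word. Take the nine-square word $(001)^2(10)^2(01)^2(10)^2(011001)^2(10)^2(01)^2(10)^2(011)^2$. It has Restivo--Salemi factorization $00\,\mu(y)\,11$, so by Lemma~\ref{tech} it has no one-letter extension on either side. Directly: prepending $0$ gives $000$, prepending $1$ gives $1001001$, appending $1$ gives $111$, and appending $0$ gives $0110110$. A procedure over factors of infinite overlap-free words would therefore never see the optimal configurations. You would need an automaton-level description of \emph{all finite} overlap-free words, and you supply neither it nor the computation.

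Your termination argument for the search is circular. The tree of square concatenations is infinitely branching, since every $\mu^k(00)$ is an overlap-free square, and ``every branch dies by depth $10$'' is the theorem itself.

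Step 2 also contains false claims. $(001)^2$ is an overlap-free square of odd period $3$, and it occurs in the witness. Also, $00$ and $11$ can be adjacent, since $00110011$ is overlap-free. A ``budget'' argument resting on these would be wrong.

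\textbf{Step 2.} More fundamentally, it lacks the two ideas the proof turns on.

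First, a recursion of the form ``$n$ squares give at least $n-c$ squares in $y$'' cannot deliver the exact constant. When $u$ or $v$ is $aa$, the boundary squares are not $\mu$-images and must be discarded. So $10$ squares only yield $8$, which is attainable. Indeed the witness above desubstitutes, after dropping its end squares, to the seven-square word $(11)(00)(11)(010)^2(11)(00)(11)$. The paper closes the induction with the stronger statement $k+L(w)+R(w)\le 9$ for a minimal counterexample, where $L$ and $R$ record one-letter extendability. If $u=aa$, then $L(w)=0$ by Lemma~\ref{tech}, while the desubstituted word $z$ has $L(z)=1$ by Lemma~\ref{four}(b). So the potential never drops.

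Second, when $|u|=|v|=1$, your picture of squares as ``$\mu$-images up to a shift'' is wrong. An offset square $x^2$ that has a neighbour in the product gives $ax^2b=\mu(t)$ with $t$ overlap-free. Lemma~\ref{ax2b} then forces $x\in\{0,1,010,101\}$, because a long offset square would create an overlap in $t$. So this case needs no recursion at all. After discarding the non-extendable end squares, what remains is a product of at least $8$ words from $\{0^2,1^2,(010)^2,(101)^2\}$ that is still extendable on some side. The finite search of Lemma~\ref{abcd2} rules this out: at most $8$ factors are possible, and no $8$-factor product extends.
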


The rough idea of the proof is as follows:  we assume, to get
a contradiction, there is a shortest concatenation of
$10$ squares $w$ that is overlap-free.  
If the squares in $w$ are aligned with the $\mu$-blocks of the
Restivo-Salemi factorization, then we can use
that factorization to construct a shorter overlap-free concatenation of
$10$ squares.
If the squares are not aligned with the $\mu$-blocks,
their structure is constrained,
and we can simply use breadth-first search to show that $9$ squares
is the maximum possible.

All words in this paper are indexed starting at position $0$.

\section{Useful lemmas}

In this section we state some of the basic results needed for
the proof.

\begin{lemma} 
Let $x,y \in \{0,1\}^*$ and $a \in \{0,1\}$.
\begin{itemize}
\item[(a)] $x$ is overlap-free if and only if $\mu(x)$ is overlap-free.
\item[(b)] If $a\, x$ contains an overlap, so does
$\overline{a}\, \mu(x)$.
\item[(c)] If $xa$ contains an overlap, so does $\mu(x) a$ .
\item[(d)] If $x^2 = \mu(y)$ then $x = \mu(t)$ for some word $t$ and
$y = tt$.
\end{itemize}
\label{four}
\end{lemma}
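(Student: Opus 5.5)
We prove the four parts separately, using elementary combinatorics on words and the standard facts about $\mu$. For (a), one direction is immediate: if $x$ contains an overlap $axaxa$, then $\mu(x)$ contains $\mu(a)\mu(x)\mu(a)\mu(x)\mu(a)$. This word begins with an overlap $\mu(a)\mu(x)\mu(a)\mu(x)b$, where $b$ is the first letter of $\mu(a)$.

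For the converse, suppose $\mu(x)$ contains an overlap $w = cvcvc$ with period $p = |cv|$. We show that $x$ contains an overlap.
\begin{itemize}
\item[\textbf{Step 1.}] $p$ is even. If $|w| \ge 7$, then $w$ contains a factor of length at least $4$ aligned to the $\mu$-blocks, so it contains $00$ or $11$ at a known parity, or else it is a factor of $(01)^*$. A period-$p$ comparison then forces $p$ to be even. Short cases ($p \le 2$, i.e.\ $|w|\le 5$) are checked directly: $00000$, $01010$, etc.\ cannot occur in any $\mu(x)$ except $01010$/$10101$, and these have even period, so they are handled the same way.
\item[\textbf{Step 2.}] Since $p = 2q$ is even, we may shift $w$ by at most one letter, using the block structure (each block determines its partner letter), to obtain an occurrence aligned to $\mu$-blocks.
\item[\textbf{Step 3.}] Extract from this aligned occurrence a factor $\mu(u)$ with $u$ of period $q$ and $|u| \ge 2q+1$. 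Then $u$ is an overlap in $x$.
\end{itemize}
The parity argument in Step 1 is the main obstacle, and we would do it carefully via the observation that in $\mu(x)$, positions $i$ with $w[i]=w[i+1]$ must all have the same parity.

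For (b), let $a\,x$ contain an overlap. If the overlap avoids the first letter, then it lies in $x$, and by (a) $\mu(x)$ contains an overlap, hence so does $\overline a\,\mu(x)$. Otherwise $a\,x$ has a prefix $a u a u a$, where $x$ begins with $uaua$. Then $\mu(x)$ begins with $\mu(u)\,a\overline a\,\mu(u)\,a\overline a$. Write $\mu(u)a\overline a = y$. Prepending $\overline a$ gives $\overline a\, y\, y$. Since $\mu(u)$ ends with the complement of the last letter of $u$, we need $\overline a\,\mu(u)a\,\overline a\,\mu(u) a\,\overline a$ to be an overlap. Indeed, $\overline a\,\mu(u)a \cdot \overline a\,\mu(u)a \cdot \overline a$ has the form $bzbzb$ with $b=\overline a$ and $z=\mu(u)a$. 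The empty-$u$ case ($aaa$) gives $\overline a\,a\overline a a\overline a$, also an overlap.

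Part (c) is symmetric. If the overlap ends at the final $a$, then $x$ ends with $auau$, and $\mu(x)a$ ends with $a\overline a\mu(u)a\overline a\mu(u)a$, which is $a\,(\overline a\mu(u))\,a\,(\overline a\mu(u))\,a$, an overlap. Otherwise we apply (a).

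For (d), let $x^2=\mu(y)$. Then $|\mu(y)|$ is even, so $2|x|$ is even. To see that $|x|$ itself is even, suppose $|x|$ is odd. Then position $|x|$ starts the second copy of $x$ at an odd index, so block boundaries inside the second copy fall at odd offsets, while in the first copy they fall at even offsets. Hence $x[2j]\ne x[2j+1]$ and $x[2j+1]\ne x[2j+2]$ for all valid $j$, so $x$ alternates, say $x=(ab)^k a$. Then the middle of $xx$ contains $aa$ at position $|x|-1$, which is even, so it lies inside a single block. That is a contradiction, since blocks are $01$ or $10$. Therefore $|x|$ is even. The first copy of $x$ is then a concatenation of blocks, $x=\mu(t)$, and the second copy is the same. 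Since $\mu$ is injective, $y=tt$.
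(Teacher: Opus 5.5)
Your arguments for (b) and (c) are the same as the paper's. You split on whether the overlap uses the added letter and appeal to (a) if it does not. Otherwise you exhibit $\overline{a}z\overline{a}z\overline{a}$ with $z=\mu(u)a$ (respectively $a\,z\,a\,z\,a$ with $z=\overline{a}\mu(u)$).

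The difference is in (a) and (d), which the paper does not prove but cites (Thue's Satz~6, and Lemmas~1.7.4 and 1.7.3 of Allouche--Shallit). You give self-contained proofs. Both are correct and are essentially the classical arguments behind those citations. An overlap in $\mu(x)$ must have even period and can then be pulled back through $\mu$. An odd-length $x$ with $x^2=\mu(y)$ would have to alternate, which puts $aa$ inside a single $\mu$-block at the centre of $x^2$.

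The paper's route buys brevity; yours makes the lemma independent of outside sources, but two places should be tightened.

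In Step~1, the first bullet is a non sequitur: an aligned factor $0101$ tells you nothing about the rest of $w$. Your closing parity observation suffices on its own, for every length. If $w[i]=w[i+1]$ for some $i$, then because $|w|=2p+1$, one of $i+p$, $i-p$ is another such position. The two positions must have the same parity, so $p$ is even. If no such $i$ exists, $w$ alternates, and $w[0]=w[p]$ forces $p$ even.

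In Step~2, you need to \emph{extend} by a partner letter rather than shift. A shifted occurrence still has length $4q+1$, too short to yield $|u|\ge 2q+1$. In the odd-start case, write $w=c'\mu(ss)$ with $|s|=q$. Then $c'=w[2q]$ is the complement of the last letter $e$ of $s$, so the letter before $w$ in $\mu(x)$ is $e$. Hence $\mu(x)$ contains the aligned factor $\mu(ess)$, giving the overlap $ess$ in $x$. In the even-start case $w=\mu(ss)c$, the letter $c$ is the first letter of $s$. Appending its partner $\overline{c}$ gives the aligned factor $\mu(ssc)$, hence the overlap $ssc$ in $x$.
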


\begin{proof}
\leavevmode
\begin{itemize}
\item[(a)] See \cite[Satz 6]{Thue:1912} or \cite[Lemma 1.7.4]{Allouche&Shallit:2003}.
\item[(b)]   Suppose $a\, x$ contains an overlap.  Then either
the overlap is entirely within $x$, or $yaya$ is a prefix of $x$ for some $y$.
In the former case, $\mu(x)$ contains an overlap by (a), and hence
so does $\overline{a} \mu(x)$.  In the latter case, 
$\overline{a} \, \mu(x) = \overline{a} \mu(yaya) =
\overline{a} \mu(y) a \overline{a} \mu(y) a \overline{a}$, which clearly
contains the overlap $\overline{a} z \overline{a} z \overline{a}$
with $z = \mu(y) a$.
\item[(c)]  Analogous to (b).
\item[(d)]  See, for example, \cite[Lemma 1.7.3]{Allouche&Shallit:2003}
\end{itemize}
\end{proof}

\begin{lemma}[Restivo-Salemi factorization]
Let $w \in \{0,1\}^*$ be an overlap-free finite word.  Then there exist
$u, v \in \{\epsilon, 0, 1, 00, 11 \}$ and an overlap-free 
word $y$ such that $w = u \mu(y) v$.  If $|w| \geq 7$ the factorization
is unique.
\label{restivo}
\end{lemma}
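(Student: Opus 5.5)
The plan is to prove existence and uniqueness separately; existence is the substantive part and goes by strong induction on $|w|$, peeling off one $\mu$-layer at a time. For small lengths, say $|w|\le 6$, I would simply check every overlap-free binary word, of which there are finitely many. In each case one can split $w$ as $u\,\mu(y)\,v$ with $u,v\in\{\epsilon,0,1,00,11\}$. For instance, $w=001011$ gives $u=00$, $y=1$, $v=11$. Here $y$ is a factor-like object of length at most $3$, and overlap-freeness of $y$ follows from Lemma~\ref{four}(a) whenever $\mu(y)$ is a factor of $w$.

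For the inductive step, take $|w|\ge 7$. Since $w$ is overlap-free, it contains neither $000$ nor $111$. The first step is to find where the positions of $w$ split into consecutive pairs. Because $w$ has length at least $7$, it must contain an occurrence of $00$ or $11$; the only alternative is $w$ being a factor of $(01)^\omega$, which contains the overlap $01010$. Any occurrence of $aa$ has to sit across a boundary between two $\mu$-blocks. So we fix the parity of the block boundaries to agree with one such occurrence.

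Next I would show that this choice of parity is consistent throughout $w$. Suppose two occurrences of $aa$ and $bb$ appeared at boundaries of opposite parity. Between them, $w$ would have to alternate, as in $aa(ba)^k bb$ or its variants with the wrong parity. A short case analysis, using the absence of $000$, $111$ and $01010$, shows that such a configuration forces an overlap. After that check, cutting $w$ into pairs at the fixed parity leaves at most two letters at each end, and every interior pair is $01$ or $10$. An interior pair equal to $00$ or $11$ would mean that an $aa$ occurs at the wrong parity.

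The leftover end pieces $u$ and $v$ each have length at most $2$. If one of them has length $2$ and is $01$ or $10$, we absorb it into $\mu(y)$. Otherwise it is $00$ or $11$, or a single letter, or empty, which is exactly the allowed set. The word $y$ is then overlap-free by Lemma~\ref{four}(a), since $\mu(y)$ is a factor of $w$.

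Uniqueness for $|w|\ge7$ follows from the parity argument, because the parity of the block boundaries is forced by any occurrence of $aa$. The main obstacle is the consistency-of-parity case analysis. I would handle it by listing the shortest words of the form $aa\,z\,bb$ with misaligned parity and checking directly that each one contains $a a \overline{a} a \overline{a}$-type overlaps or $01010$.
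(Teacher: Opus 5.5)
Your proposal has a genuine gap: the consistency-of-parity step is false, and that step is where the content of the lemma lies.

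\textbf{The counterexample.} Take $w=1100100$. It has length $7$, contains no $000$ or $111$, and has no factor $cxcxc$ of length $5$ or $7$, so it is overlap-free. Yet it contains $00$ at positions $2$ and $5$, which have opposite parity.

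In general, let $aa$ at position $i$ and $bb$ at position $j$ be consecutive doubled letters with $j-i$ odd. The alternating stretch between them forces $b=a$ and $w[i..j+1]=a(a\overline{a})^k aa$. The cases $k=0$ and $k\ge 2$ contain $aaa$ or $a\overline{a}a\overline{a}a$. But $k=1$ gives $aa\overline{a}aa$ (e.g.\ $00100$), which is overlap-free. So your planned check of the shortest misaligned words $aa\,z\,bb$ fails at the very first case.

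\textbf{Why this matters structurally.} Suppose every doubled letter straddled a block boundary of one fixed parity. Then the leftover end pieces would have length at most $1$, so your procedure could never output $u$ or $v\in\{00,11\}$. Yet the only factorization of $1100100$ is $1\cdot\mu(10)\cdot 00$. For the same reason, your claim that every occurrence of $aa$ must straddle two $\mu$-blocks is wrong: the final $00$ here is $v$ itself and would force the wrong parity. So your uniqueness justification is also wrong, even though its conclusion holds.

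\textbf{The missing idea.} The misaligned pattern can only sit at an end of $w$. If $aa\overline{a}aa$ had a letter on each side, both letters would have to be $\overline{a}$ to avoid $aaa$, giving the overlap $\overline{a}aa\overline{a}aa\overline{a}$.

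From this the lemma follows:
\begin{itemize}
\item All doubled letters starting at positions $i$ with $1\le i\le |w|-3$ (call them interior) have the same parity. Between two interior occurrences every doubled letter is interior, and an odd gap would put the pattern at position $0$ or $|w|-2$.
\item For $|w|\ge 7$ an interior occurrence exists. Otherwise $w$ with its first and last letters removed is alternating of length at least $5$, so it contains $01010$ or $10101$.
\item An interior occurrence cannot lie inside $u$ or $v$, by position. It cannot straddle $u$ and $\mu(y)$, or $\mu(y)$ and $v$, without creating a cube $bbb$. It cannot lie inside a $\mu$-block. So it straddles two blocks of $\mu(y)$, which fixes the block parity.
\item Cut $w$ into pairs at that parity. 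Any pair that is not a $\mu$-block is a doubled letter at position $0$ or $|w|-2$, and it becomes $u$ or $v$. This gives existence directly; your induction hypothesis is never used, and small lengths are a finite check.
\item Uniqueness follows because the parity is forced by an interior occurrence. Once the parity is fixed, $u$ and $v$ are determined by whether the end pairs are doubled.
\end{itemize}

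For comparison, the paper gives no proof of this lemma. It simply cites Restivo--Salemi, Kfoury, and Allouche--Shallit, Prop.~1.7.5.
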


\begin{proof}
The theorem is essentially in \cite{Restivo&Salemi:1985a,Restivo&Salemi:1985b}.  The version we use here can essentially be found in
\cite{Kfoury:1988b} or \cite[Prop.~1.7.5]{Allouche&Shallit:2003}.
\end{proof}

\begin{lemma}
Let ${\cal A} = \{ 0^2, 1^2, (010)^2, (101)^2 \}$.   Then a word
$w \in \{0,1\}^*$ is an overlap-free square if and only if $w$ is
a conjugate of $\mu^i (x)$ for $x\in \cal A$ and $i \geq 0$.
In particular, an overlap-free square has length either
$2^i$ or $3 \cdot 2^i$ for $i \geq 1$.
\label{ovfl2}
\end{lemma}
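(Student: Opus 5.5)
We argue by strong induction on $|w|$, with the ``if'' direction done first.

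\emph{If.} Every word in ${\cal A}$ is an overlap-free square. By Lemma~\ref{four}(a) and induction, $\mu^i(x)$ is overlap-free, and $\mu^i(xx')=\mu^i(x')\mu^i(x')$ whenever $x=x'x'$, so $\mu^i(x)$ is a square. Any conjugate of a square $uu$ has the form $(vt)(vt)$ with $u=tv$, so it is again a square. Such a conjugate is also a factor of $\mu^i(x)\mu^i(x)=\mu^i(x'^4)$. This is the point where we must verify that conjugating creates no overlap, which is not automatic. The reduction is that $\mu^i(x'^4)$ contains no overlap of length at most $|\mu^i(x)|+1$. For $i=0$ this is a direct check, since $x'^4$ is $0000$ or $(010)^4$ and the conjugates $001001$ and $100100$ are overlap-free. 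Longer $i$ follow by iterating: an overlap of length at most $2|z|+1$ inside $\mu(z^2)$ can be ``desubstituted'' to one of length at most $|z|+1$ inside $z^2$. Here we use the standard fact, from the proof of Lemma~\ref{four}(a), that an overlap $axaxa$ in a $\mu$-image with $|x|\ge 2$ has $|ax|$ even and desubstitutes. Short overlaps $aaa$ and $ababa$ never occur in $\mu$-images. Finally, $|\mu^i(x)|=2^{i+1}$ or $3\cdot 2^{i+1}$, which gives the length claim.

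\emph{Only if.} Let $w=uu$ be an overlap-free square, and suppose first that $|w|\le 12$, say. A finite check shows that the overlap-free squares of length at most $12$ are exactly the conjugates of $00,11,0101,1010,010010,101101,\mu^i(\ldots)$ of lengths $8$ and $12$. This check is mechanical.

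Now suppose $|w|$ is large, so in particular $|w|\ge 7$. By Lemma~\ref{restivo}, $w=p\mu(y)s$ with $p,s\in\{\epsilon,0,1,00,11\}$. The key step is to show that we can conjugate $w$ to a word that is exactly a $\mu$-image. Consider $u^3$, which is a factor of $uuuu$. The word $u^4$ need not be overlap-free, but $uu$ is, and $u$ is long, so we can apply the Restivo--Salemi factorization to $uu$ and to its rotation by one position. Uniqueness of the factorization for length at least $7$, together with periodicity (the factorization of $uu$ restricted to either copy of $u$ must agree, since the $\mu$-block ``phase'' is determined locally by any factor $00$ or $11$), shows that the $\mu$-block parsing has the same parity at positions $0$ and $|u|$. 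Hence $|u|$ is even, and some rotation $w'$ of $w$ by $0$ or $1$ positions is a square whose halves are unions of $\mu$-blocks. The rotation $w'$ is also overlap-free, since it is a factor of the cyclic word, and any overlap there would have to wrap around; we exclude this using the same parity argument on $u$.

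Then $w'=\mu(y)$ with $y$ a square by Lemma~\ref{four}(d), and $y$ is overlap-free by Lemma~\ref{four}(a). Induction says $y$ is a conjugate of $\mu^j(x)$, so $\mu(y)$ is a conjugate of $\mu^{j+1}(x)$, since $\mu$ maps conjugates to conjugates. Then $w$, being a conjugate of $w'$, is a conjugate of $\mu^{j+1}(x)$.

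The main obstacle is the step showing that the $\mu$-phase is consistent around the square, equivalently that $|u|$ is even with aligned blocks. We handle it via the local determination of block boundaries by occurrences of $00$ or $11$, which must appear in any overlap-free word of length at least $5$ other than factors of $(01)^\omega$, and those are too short to be squares without overlap.
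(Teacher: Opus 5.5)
The paper does not prove this lemma itself; it cites Thue and Shelton--Soni. Your self-contained induction is therefore a different route, and its overall architecture is reasonable, but two steps fail as written.

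\textbf{The ``if'' direction.} The invariant you propagate is false. $\mu^i(x'^4)$ always contains $\mu^i(x')\mu^i(x')$ followed by the first letter of $\mu^i(x')$, which is an overlap of length exactly $|\mu^i(x)|+1$. At $i=0$ this is already $000$ in $0000$ and $0100100$ in $(010)^4$.

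Your claim that $ababa$ never occurs in a $\mu$-image is also false. For example, $01010$ is a factor of $\mu(000)$, and $\mu(0000)=01010101$ is one of your own words at $i=1$.

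The repair is to use the bound you actually need. A conjugate of $\mu^i(x)$ has length $n_i=|\mu^i(x)|$, so it suffices that $\mu^i(x'^4)$ has no overlap of length at most $n_i$. Every overlap in a $\mu$-image has even period $2q$ (period $2$ included) and desubstitutes to an overlap of period $q$. Hence an overlap of odd length $\ell\le 2n_i$ in $\mu^{i+1}(x'^4)$ gives one of length $(\ell+1)/2\le n_i$ in $\mu^i(x'^4)$. The case $i=0$ is then a direct check.

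\textbf{The ``only if'' direction.} The real gap is that you never prove the rotated word $w'$ is overlap-free. That is exactly what you need before invoking Lemma~\ref{four}(a) and the induction hypothesis.

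Applying Restivo--Salemi to the rotation of $uu$ already presupposes this, so that sentence is circular. You don't need it anyway: the $00$/$11$ argument applied to $uu$ alone gives alignment and $|u|$ even. Likewise, ``the same parity argument'' only yields alignment, not absence of overlaps.

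The missing ingredient is the one-letter extension lemma, Lemma~\ref{four}(b)/(c).
\begin{itemize}
\item In the odd-phase case, alignment gives $w=\overline{d}\,\mu(y)\,d$ and $w'=\mu(y)\,d\,\overline{d}=\mu(yd)$. Since $\mu(y)d$ is a factor of $w$, it is overlap-free. By the contrapositive of Lemma~\ref{four}(c), $yd$ is overlap-free, and then $w'=\mu(yd)$ is overlap-free by Lemma~\ref{four}(a).
\item In the even-phase case, you should also rule out $p$ or $s$ being $aa$. If either were $aa$, the other copy of $u$ would begin or end with $aa$ at an aligned position inside $\mu(y)$, which is impossible. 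So $w=\mu(y)$ directly.
\end{itemize}

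Finally, the alignment argument needs an occurrence of $00$ or $11$ in $u$ that lies inside the $\mu(y)$ part in both copies. This is guaranteed once $|u|\ge 9$, so your finite base check should go up to $|w|=16$ rather than $12$.
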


\begin{proof}
See Thue \cite{Thue:1912} or Shelton and Soni \cite{Shelton&Soni:1985}.
\end{proof}

%\begin{lemma}
%Suppose $y, y', u \in \{0,1\}^*$ and $c,d \in \{0,1\}$ are such that
%$u = c \mu(y) = \mu(y') d$.  Then $u = c(\overline{c} c)^{|y|}$.
%\end{lemma}
%
%\begin{proof}
%See \cite[Lemma 1.7.2]{Allouche&Shallit:2003}.
%\end{proof}

\begin{lemma}
Suppose  $a, b \in \{0,1 \}$, $x,y \in \{0,1\}^+$, and $y$ is overlap-free.
If $ax^2b = \mu(y)$, then $x \in \{0,1,010,101\}$.
\label{ax2b}
\end{lemma}

\begin{proof}
If $x$ is of even length, say $|x| = 2m > 0$,
then $|\mu(y)| = 4m+2$ and so
$|y| = 2m+1$.  Write $y = c_0 \cdots c_{2m}$; then
$\mu(y) = c_0 \overline{c_0} \cdots c_{2m}\overline{c_{2m}}$.
Thus 
$$x = \overline{c_0} c_1 \cdots c_{m-1} \overline{c_{m-1}} c_{m} =
\overline{c_{m}} c_{m+1} \cdots \overline{c_{2m-1}} c_{2m}.$$
It follows that $\overline{c_i} = \overline{c_{m+i}}$
for $0 \leq i \leq m-1$ and $c_i = c_{m+i}$ for $1 \leq i \leq m$;
hence $c_i = c_{m+i}$ for $0 \leq i \leq m$.
Thus $y = c_0\cdots c_{m-1} c_0 \cdots c_{m-1} c_0$, an overlap, which is
a contradiction.

Hence $x$ is of odd length.  Since $y$ is overlap-free, by
Lemma~\ref{four} (a), so is $\mu(y) = ax^2b$, and hence so is $x^2$.
By Lemma~\ref{ovfl2}, the only possibilities for $x$
are $x \in \{0,1,001,010,011,100,101,110 \}$.
Of these the only ones for which $\mu(y) = ax^2b$ has a solution
are $\{0,1,010,101\}$.
\end{proof}

We define two functions on binary words.  We define
$L(w)$ to be $1$ if there exists $a \in \{0,1\}$ such that
$aw$ is overlap-free, and $0$ otherwise.  Similarly, we
define $R(w)$ to be $1$ if there exists $a \in \{0,1\}$ such that
$wa$ is overlap-free, and $0$ otherwise.  Thus $L$ measures whether
an overlap-free word can be extended one symbol to the left to get
a longer overlap-free word, and
$R$ measures the same thing for extension to the right.

\begin{lemma}
Let $w \in \{0,1\}^*$ be overlap-free with $|w| \geq 16$.
Let its Restivo-Salemi factorization be $w = u \mu(y) v$, with
$y$ overlap-free, and $u, v \in \{\epsilon, 0, 1, 00, 11 \}$.
\begin{itemize}
\item[(i)] If $u = aa$ for some $a \in \{0,1 \}$, then $L(w) = 0$.
\item[(ii)] If $v = aa$ for some $a \in \{0,1\}$, then $R(w) = 0$.
\end{itemize}
\label{tech}
\end{lemma}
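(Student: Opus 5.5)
The plan is to prove (i) by a direct local analysis of the first few letters of $w$. Then (ii) follows by symmetry: reversal preserves overlap-freeness, and the reversal of $\mu(y)$ is $\mu(y^R)$ with each block complemented, i.e.\ $\overline{\mu(y^R)} = \mu(\overline{y^R})$ up to the obvious relabeling. So reversing $w = u\mu(y)v$ gives a Restivo-Salemi factorization of $w^R$ in which $v^R$ plays the role of $u$. (Alternatively, repeat the argument mirrored.)

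For (i), write $w = aa\,\mu(y)\,v$. We must show both $aw$ and $\overline{a}w$ contain overlaps.

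\textbf{The letter $a$.} Prepending $a$ gives the prefix $aaa$, an overlap.

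\textbf{The letter $\overline{a}$.} First pin down the beginning of $y$. Since $|w| \geq 16$ and $|u|,|v| \leq 2$, we have $|\mu(y)| \geq 12$, so $|y| \geq 6$; in particular $y$ has at least two letters.
\begin{itemize}
\item The first $\mu$-block of $\mu(y)$ cannot begin with $a$, since otherwise $w$ would begin with $aaa$. Hence $y$ begins with $\overline{a}$, and $w$ begins $aa\,\overline{a}a$.
\item Suppose the second letter of $y$ were also $\overline{a}$. Then $w$ would begin $aa\overline{a}a\overline{a}a$, which contains the overlap $a\overline{a}a\overline{a}a$. This contradicts the overlap-freeness of $w$, so $y$ begins $\overline{a}a$.
\end{itemize}
Since $\mu(\overline{a}a) = \overline{a}aa\overline{a}$, the word $w$ begins $aa\overline{a}aa\overline{a}$. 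Therefore $\overline{a}w$ begins
\[
\overline{a}\,aa\,\overline{a}\,aa\,\overline{a},
\]
which is an overlap of the form $bxbxb$ with $b = \overline{a}$ and $x = aa$. Hence $L(w) = 0$.

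\textbf{Expected obstacle.} The only delicate point is the second step above: ruling out $y$ beginning $\overline{a}\,\overline{a}$ by exhibiting an overlap already inside $w$. The length hypothesis $|w| \geq 16$ is used only to guarantee $|y| \geq 2$, and to place us in the range where the factorization of Lemma~\ref{restivo} is unique, so that ``$u = aa$'' is well defined. No deeper use of Lemma~\ref{four} should be needed.
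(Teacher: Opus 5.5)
Your proof is correct and is essentially the paper's argument. Both proofs pin down the start of $y$ from the overlap-freeness of $aa\,\mu(y)$, then observe that $aw$ begins with $aaa$ and $\overline{a}w$ begins with the overlap $\overline{a}aa\overline{a}aa\overline{a}$. The only difference is that the paper determines three letters of $y$ ($\overline{a}a\overline{a}$) by checking all eight prefixes, whereas you correctly observe that the first two letters $\overline{a}a$ already suffice.
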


\begin{proof}
We prove only (i), as (ii) follows with a symmetric argument.
Since $|w| \geq 16$ and $|v| \leq 2$, it follows that $|y| \geq 6$.
Checking all eight $3$-symbol prefixes of $y$, we easily confirm
that $aa \mu(y)$ being overlap-free implies that
$y$ begins with $\overline{a} a \overline{a}$. Thus $w$ begins
with $a a \mu(\overline{a} a \overline{a})$.  If we prepend
$a$, then the resulting word begins with the overlap $aaa$.
If we prepend $\overline{a}$ then the resulting word
begins $\overline{a} a a \overline{a} a a \overline{a}$, which is
an overlap.  Hence no left extension is overlap-free, and $L(w) = 0$.
\end{proof}

\begin{lemma}
Define
$$ A = 0^2, \quad B = 1^2, \quad C = (010)^2, \quad D = (101)^2.$$
Every concatenation of nine or more words chosen from
${\cal A} = \{A,B,C,D\}$ contains an overlap.  Furthermore, no product of
$8$ words is extendable to the left or to the right and still
be overlap-free.
\label{abcd2}
\end{lemma}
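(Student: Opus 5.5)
This is a finite check, and I would carry it out by a breadth-first (or depth-first) search over words on the alphabet $\{A,B,C,D\}$. Overlap-freeness is a factor-closed property, so if some product $X_1\cdots X_k$ with $X_i\in{\cal A}$ contains an overlap, then so does every extension $X_1\cdots X_kX_{k+1}$. The search therefore builds the set $S_k$ of all length-$k$ sequences whose concatenation is overlap-free. At each stage we append one of the four blocks to each survivor of $S_{k-1}$ and test the new word for overlaps. Since every new overlap must end inside the appended block, we only need to test suffixes.

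Some local pruning keeps the search small. No two equal adjacent blocks can occur: $AA=0000$ and $BB=1111$ contain $000$ and $111$, and $CC=(010)^4$ contains $(010)^2 0$, an overlap. Likewise $AC = 00010010$ contains $000$, and $BD$ contains $111$. The complement map swaps $A\leftrightarrow B$ and $C\leftrightarrow D$ and preserves overlap-freeness, so we may fix $X_1\in\{A,C\}$ and halve the work. Reversal fixes each block, so it also acts on the sequences and gives a further consistency check.

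The main claim follows once the search shows $S_9=\emptyset$. For the extendability claim, we take each element of $S_8$ (all its complements and reversals included) and check directly that appending $0$ or $1$ on the right creates an overlap, and likewise for prepending on the left. Equivalently, $R(w)=L(w)=0$ for every such product $w$.

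One could try to reason structurally, using Lemma~\ref{restivo} together with the fact that each block contains a $00$ or $11$. That would force the Restivo--Salemi boundaries into specific positions relative to the blocks. However, the counts are tiny: words of length at most $54$, and at most $2\cdot 3^{8}$ candidates before pruning. So I would present the argument as a computer verification, and mention that it is small enough to check by hand-guided enumeration. The only real obstacle is making sure the enumeration is exhaustive and the overlap test is correct. I would guard against errors by cross-checking the sizes of $S_k$ against the complement and reversal symmetries, and by confirming that the surviving words of $S_8$ are genuinely overlap-free.
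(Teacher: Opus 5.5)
Your proposal is essentially the paper's proof. The paper runs this breadth-first search over products of $A,B,C,D$, discarding words with overlaps, and tabulates eight survivors at $n=8$ and none at $n=9$; it then checks directly that none of those eight products has an overlap-free one-letter extension on either side. Your local pruning facts and the complement/reversal symmetry reductions are correct and only shorten that same finite verification.
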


\begin{proof}
We use breadth-first search in the tree of all finite words
that are products of $A, B, C, D$, and discarding those that have an overlap.
The results are depicted in Table~\ref{abcd}.
\begin{table}[H]
\begin{center}
\begin{tabular}{c|l}
$n = 1$ & $\{ A , B , C , D \}$ \\
$n = 2$ & $\{AB, AD, BA, BC, CB, DA \}$\\
$n = 3$ & $\{ ABA, ABC, ADA, BAB, BAD, BCB, CBA, CBC, DAB, DAD \}$ \\
$ n = 4$ & $\{ABAB, ABAD, ABCB, ADAB, ADAD, BABA, BABC, $ \\
 & \quad\quad  $ BADA, BCBA, BCBC, CBAB, CBCB, DABA, DADA \}$ \\
$n = 5$ & $\{ ABADA, ABCBA, ADABA, BABCB, BADAB, BCBAB, CBABC, DABAD\}$ \\
$n = 6$ & $\{ ABADAB, ABCBAB, ADABAD, BABCBA, BADABA, $ \\
 & \quad\quad $ BCBABC, CBABCB, DABADA \}$ \\
$n = 7$ & $\{ ABADABA, ABCBABC, ADABADA, BABCBAB, BADABAD, $ \\
 & \quad\quad $ BCBABCB, CBABCBA, DABADAB \}$ \\
$n = 8$ &  $\{ABADABAD, ABCBABCB, ADABADAB, BABCBABC, BADABADA, $ \\
 & \quad\quad $ BCBABCBA, CBABCBAB, DABADABA \}$ \\
$n = 9$ & $\emptyset$
\end{tabular}
\end{center}
\caption{Overlap-free words that are formed from blocks $A, B, C, D$
for lengths $1$ through $9$.}
\label{abcd}
\end{table}
At length $n = 9$ no words survive, which proves the first claim.

For the second claim, we simply check the $8$ possible words of length
$8$ and verify that they have no overlap-free extension on either side.
\end{proof}

\section{Proof of the main result}

\begin{theorem}
Let $w$ be an overlap-free binary word, and suppose
$w = S_1 S_2 \cdots S_k$, where each $S_i$ is a square.
Then $k + L(w) + R(w) \leq 9$.
\label{main2}
\end{theorem}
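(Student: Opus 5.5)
We argue by induction on $|w|$; equivalently, we take a counterexample $w = S_1\cdots S_k$ with $k+L(w)+R(w)\ge 10$ and $|w|$ minimal. Short words, say $|w|$ below a fixed bound such as $|w| < 16$ or somewhat larger, are handled by a finite computation. We enumerate overlap-free words that are concatenations of squares and compute $L$ and $R$ for each. Lemma~\ref{ovfl2} lists the possible overlap-free squares, so this enumeration is a breadth-first search in the style of Lemma~\ref{abcd2}. For $|w|\ge 16$ we write the Restivo-Salemi factorization $w=u\mu(y)v$ given by Lemma~\ref{restivo}.

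\textbf{Case 1: all cut points between consecutive squares lie at even positions relative to $\mu(y)$, and $u=v=\epsilon$.} Then each $S_i=\mu(t_i)$ is the image of a factor of $y$. By Lemma~\ref{four}(d), $S_i=x_i^2=\mu(y_i)$ forces $y_i=t_it_i$ to be a square, so $y=T_1\cdots T_k$ is a shorter overlap-free concatenation of $k$ squares. To compare the extension functions we use Lemma~\ref{four}(b),(c): if $aw$ is overlap-free with $w=\mu(y)$, then $\overline{a}y$ is overlap-free (contrapositive of (b) applied to $\overline{a}\,y$), so $L(y)\ge L(w)$, and similarly $R(y)\ge R(w)$. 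Hence $y$ is a shorter counterexample, a contradiction.

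\textbf{Case 2: some square is misaligned, or $u,v$ are nonempty.} First, if $u=aa$ then $L(w)=0$ by Lemma~\ref{tech}. Moreover $S_1$ must begin with $aa$, so the leading $aa$ is essentially forced to be a square $A$ or $B$, or part of one. We can then peel $u$ and $v$ off: for example, when $u=aa=S_1$, the word $w'=S_2\cdots S_k$ has $k-1$ squares and $L(w')=1$, so $w'$ would be a shorter counterexample. When $|u|=1$ or $|v|=1$, or when an internal boundary falls at an odd position, some square $S_i$ straddles $\mu$-blocks as $a x^2 b$ inside $\mu(y)$, or has $x^2$ starting at an odd position. Lemma~\ref{ax2b} then forces $x\in\{0,1,010,101\}$, so $S_i\in\mathcal A$. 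The key step is to show that a misalignment propagates. If one square boundary is odd, then its neighbour begins at an odd offset, so the neighbour is again of the form $ax^2b$ or $x^2$ at odd offset, and hence also lies in $\mathcal A$. Consequently either all squares lie in $\mathcal A$, and Lemma~\ref{abcd2} gives $k\le 8$, with $L=R=0$ when $k=8$, so $k+L+R\le 9$; or the odd-offset region is confined to a bounded number of squares at the ends. In the latter case we strip those squares to get an aligned core, apply Case 1 (or induction) to the core, and bound the contribution of the end squares directly.

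The main obstacle is Case 2, specifically the propagation argument and the accounting at the boundaries. We need to make precise that an odd cut forces every subsequent cut to be odd. The reason is that squares of even length preserve parity while squares of odd half-length lie in $\mathcal A$ and have lengths $2$ and $6$, so a careful parity analysis is required. We also need to check that mixed situations, with an aligned middle and short misaligned pieces at the ends, still satisfy $k+L+R\le 9$. I would first try to prove that parity of cut positions is globally constant, so that the only cases are \emph{all aligned}, which gives Case 1 after stripping $u,v$, or \emph{all in $\mathcal A$}. If that fails, I would fall back on a computer search over bounded configurations near the ends.
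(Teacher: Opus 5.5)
Your skeleton matches the paper's. You take a minimal counterexample and its Restivo--Salemi factorization. In the aligned case you descend through Lemma~\ref{four}(d), with $L$ and $R$ monotone, and in the offset case you reduce to ${\cal A}$ via Lemmas~\ref{ax2b} and~\ref{abcd2}. Your peeling step is sound, and more general than you state: whenever $L(w)=0$, the word $S_2\cdots S_k$ has $L=1$ and $R\ge R(w)$, so it is a shorter counterexample whatever $S_1$ is.

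The step you call the main obstacle is actually immediate. Every square has even length, so every cut point is at an even position of $w$. The $\mu$-blocks start at positions congruent to $|u|$ mod $2$, and $|u|\equiv|v| \pmod 2$ because $|w|$ is even. Hence either every square is aligned ($u,v\in\{\epsilon,00,11\}$) or every square is offset by one ($|u|=|v|=1$). The mixed ``aligned core, misaligned ends'' configuration never arises, so the fallback search is unnecessary. No small-case computation is needed either: $k+L+R\ge 10$ forces $k\ge 8$, hence $|w|\ge 16$.

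The genuine gap is the two end squares when $|u|=|v|=1$. For an interior square, the letters $a,b$ adjacent to it in $w$ make $ax^2b$ a block-aligned factor of $\mu(y)$, so Lemma~\ref{ax2b} applies. But the first letter of $S_1$ is $u$ itself, whose $\mu$-partner would lie to the left of $w$. So there is no $\mu$-image $aS_1b$ inside $w$ to feed to Lemma~\ref{ax2b}, and symmetrically for $S_k$. Your assertion that all squares lie in ${\cal A}$ is therefore unproved exactly there.

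The paper handles this as follows:
\begin{enumerate}
\item It keeps $S_1$ only when $L(w)=1$; when $L(w)=0$ it drops $S_1$, which is your peeling.
\item It takes the extension letter $a$ with $aw$ overlap-free, and lets $b$ be the letter after $S_1$ in $w$.
\item Overlap-freeness of $ax^2$ and $x^2b$ forces $a=\overline{x_{n-1}}$ and $b=\overline{x_0}$.
\item Since $x_{n-1}b$ is a $\mu$-block, $b=\overline{x_{n-1}}$. So $a=b=\overline{x_0}$ and $ax^2b$ is a $\mu$-image.
\item This image is overlap-free, since any overlap in it would have to be the whole word, which has even length. Lemma~\ref{ax2b} then gives $x\in\{0,1,010,101\}$.
\end{enumerate}
The surviving $k+L(w)+R(w)-2\ge 8$ squares then all lie in ${\cal A}$, and Lemma~\ref{abcd2} yields the contradiction.

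One can also avoid using $L$ and $R$ here. The $\mu$-blocks internal to $S_1$, together with the block $x_{n-1}b$, already force $S_1\in{\cal A}$. For $|x|$ even they give $x_{n-1}\ne x_0$ and $b=x_0$, so $x^2x_0$ is an overlap. For $|x|$ odd they force $x$ to alternate, so $|x|\le 3$.
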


\begin{proof}
Suppose the result is false, and let $w$ be such that
$k + L(w) + R(w) \geq 10$, and $|w|$ is as small as possible.
This means that $k \geq 8$, and hence $|w| \geq 16$.  By
Lemma~\ref{restivo}, the Restivo-Salemi factorization of
$w$ is unique, and we can write
$w = u \mu(y) v$, where $u, v \in \{\epsilon, 0, 1, 00, 11 \}$.
Because $|w|$ is even, we know that $|u|$ and $|v|$ have the same parity.

\bigskip

\noindent{\it Case 1:}  $|u|$ and $|v|$ are both even.  Thus
$u, v \in \{ \epsilon, 00, 11 \}$.
If $u \not= \epsilon$, set $c = 1$; otherwise set $c = 0$.
If $v \not= \epsilon$, set $d = 1$; otherwise set $ d= 0$.
Define $w' = S_{c+1} S_{c+2} \cdots S_{k-d}$; the effect is to remove
the leftmost square from $w$ if $u$ is nonempty, and similarly for
the rightmost square if $v$ is nonempty.  

Clearly all the squares in $w$ start at even positions, and since
$|u|$ is even, all the images of individual letters of $y$ under $\mu$
also start at even positions.  Thus each square in $w'$
lies inside $\mu(y)$ and begins at an even position.   Hence
each square $S_i$ can be written as $S_i = \mu(T_i)$ for some 
nonempty word $T_i$.  By Lemma~\ref{four} (d)
we know that $T_i$ is itself a square.
Thus $w' = \mu(z)$, where $z = T_{c+1} T_{c+2} \cdots T_{k-d}$ is
an overlap-free product of $k-c-d$ nonempty squares, and $|z| < |w|$.

We now argue that $L(z) \geq L(w) + c$.
There are three subcases to consider, based on the values of $c$
and $L(w)$.

\bigskip

\noindent{\it Case (a): $c = 1$.}
Then $u = aa$ for some $a \in \{0,1\}$, and then Lemma~\ref{tech}
shows $L(w) = 0$.  The word $w'$ has some letter $b \in \{0,1\}$ 
immediately to its left in $w$, and $bw' = b \mu(z)$ is overlap-free.
By the contrapositive of Lemma~\ref{four} (b),
we know that $\overline{b}z$ is overlap-free, so
$L(z) = 1$.  Hence $L(z) \geq L(w) + c$.  

\bigskip

\noindent{\it Case (b): $c = 0$ and $L(w) = 1$.}   Then $bw$ is
overlap-free for some $b \in \{0,1\}$.  Then $bw' = b \mu(z)$ is a factor
of $bw$, and so the contrapositive of Lemma~\ref{four} (b)
again gives $L(z) = 1$.  Hence
$L(z) \geq L(w) + c$ again.

\bigskip

\noindent{\it Case (c):  $c = 0$ and $L(w) = 0$.}  There is nothing
more to prove in this case.

\bigskip

\noindent Thus, in all three cases, we have $L(z) \geq L(w) + c$.

The case of the right side is exactly parallel, and using the same
reasoning we get $R(z) \geq R(w) + d$.

Combining the two inequalities, we get
$(k - c - d) + L(z) + R(z) \geq k + L(w) + R(w) \geq 10$.
Thus $z$ is a shorter counterexample, contradicting our assumption that
$w$ was of minimal length.

\bigskip

\noindent {\it Case 2:  $|u|=|v| = 1$.}  Let $c' = 1-L(w)$ and
$d' = 1-R(w)$.  Define
\begin{equation}
w'' = S_{c'+1} S_{c'+2} \cdots S_{k-d'}. \label{foo5}
\end{equation}
Then $w''$ is the product of 
$m = k-(1-L(w))-(1-R(w)) = k+L(w) + R(w) - 2 \geq 8$
squares.  Furthermore, the starting position
of each square is offset by $1$ with respect to the $\mu$-blocks
in the Restivo-Salemi factorization of $w$.

We claim that each of the square factors in the product 
\eqref{foo5}
is either $0^2, 1^2, (010)^2$, or $(101)^2$.  Let $x^2$ be one of the
square factors in $w''$.
Then $x^2$ has a possible overlap-free extension on both
the left and right:  if $x^2$ is not the first square in $w$, then
we can use its left neighbor in $w$.  Otherwise it is the first
square in $w$, and hence has a left extension to an overlap-free word
by the definition of $c'$.  The analogous statement holds on the
right side.  Choose $a,b \in \{0,1\}$ such that $ax^2$ and $x^2b$
are both overlap-free.

Let $x = x_0 x_2 \cdots x_{n-1}$.  Since $ax^2$ is overlap-free, we must have
$a \not=x_{n-1}$.  Hence $a = \overline{x_{n-1}}$.  Similarly, since
$x^2b$ is overlap-free, we see that $b = \overline{x_0}$. 

Now use the fact that $x^2$ has at least one neighbor in the product $w''$.
If it has a right neighbor, then the letter $b$ following $x^2$ lies
inside $w$, and therefore $x_{n-1} b$ is a $\mu$-block.
Hence $b = \overline{x_{n-1}}$.   Thus $x_0 = x_{n-1}$.
The case of a left neighbor is symmetric.
Hence $a = b$ and $ax^2b$ is a $\mu$-image.

It now follows from the one-symbol offset between the square-factorization 
\eqref{foo5} and the Restivo-Salemi factorization 
that $x = \overline{a} x' \overline{a}$, and hence
$$ax^2b = \underbrace{a \overline{a}} \ \underbrace{x' \overline{a}} \ 
\underbrace{\overline{a} x'} \ \underbrace{\overline{a} a} \ ,$$
where each underbrace delineates a $\mu$-image.  Thus
$a x^2 b = \mu(t)$ for some overlap-free $t$.  By Lemma~\ref{ax2b},
it follows that
\eqref{foo5} is a product of $m$ words chosen from $\cal A$.
By Lemma~\ref{abcd2},
we know $m < 9$, so $m = 8$.  But no overlap-free product of $8$
words from $\cal A$ has a one-letter overlap-free extension on
either side.
This contradicts the second
assertion of Lemma~\ref{abcd2}.

This completes the proof.
\end{proof}

We are now ready to complete the proof of Theorem~\ref{main}.
\begin{proof}
If there were an overlap-free
concatenation of ten or more squares, 
then Theorem~\ref{main2} would give
$10 \leq k \leq k + L(w) + R(w) \leq 9$, a contradiction.

The bound $10$ is best possible, as it is easy to check that
$$ (001)^2 (10)^2 (01)^2 (10)^2 (011001)^2 (10)^2 (01)^2 (10)^2 (011)^2$$
is an overlap-free product of nine squares.
\end{proof}

\section{Larger exponents}

Karhum\"aki and Shallit \cite{Karhumaki&Shallit:2004} proved that 
there is a factorization theorem for binary
words analogous to that of Restivo-Salemi,
for all exponents $e$ with $2< e \leq 7/3$.  So it is natural to wonder
if Theorem~\ref{main} has any counterpart for larger $e$.
The answer is no, since Currie and Rampersad
proved that for each real $e > 2$ there is an infinite
$e$-power-free word that contains
squares beginning
at every position \cite[Theorem 3.4]{Currie&Rampersad:2010}.

\section{Larger alphabets}

\begin{proposition}
Over a ternary alphabet,
there are uncountably many overlap-free infinite words
that are the concatenation of square factors.
\end{proposition}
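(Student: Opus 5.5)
We construct the words explicitly. Fix an infinite squarefree word over a ternary alphabet; there are uncountably many such words, for example by Thue's construction combined with a standard argument that the squarefree words over three letters form a set of positive entropy, so there are uncountably many infinite ones. The idea is to apply a morphism $h$ that sends each letter to a square, chosen so that the image of every squarefree word is overlap-free. The image $h(\mathbf{x})$ is then automatically a concatenation of squares, namely the images of the individual letters. Distinct words have distinct images whenever $h$ is injective on infinite words, for instance when $h$ is a uniform, prefix-free code. So uncountably many squarefree words produce uncountably many overlap-free infinite concatenations of squares.

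The concrete plan is to take $h(a)=x_a x_a$ for three suitably chosen words $x_0,x_1,x_2$ over $\{0,1,2\}$ of a common length $\ell$, found by computer search. A natural first try is $x_0=01$, $x_1=02$, $x_2=12$ (so $h(0)=0101$), or slightly longer words if these fail. We then need a verification theorem showing that $h$ maps squarefree words to overlap-free words.

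For this we use a standard synchronization argument. Suppose $h(\mathbf{x})$ contains an overlap $azaza$ with period $p=|az|$.
\begin{itemize}
\item If $p$ is small, say $p<2\ell$ or a similar bound, then the overlap lies inside the image of a factor of $\mathbf{x}$ of bounded length. These finitely many factors, namely the squarefree words of length at most some constant, can be checked directly by computer.
\item If $p$ is large, we first show that $h$ is synchronizing: any occurrence of an image $h(b)$, or of a sufficiently long factor of one, inside $h(\mathbf{x})$ at a shifted position is impossible. This forces $p$ to be a multiple of $2\ell$ and the two halves of the overlap to be aligned with block boundaries.
\item The overlap then pulls back to a factor of $\mathbf{x}$ of the form $b s b s$ or similar. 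This is a square in $\mathbf{x}$, contradicting squarefreeness.
\end{itemize}

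The main obstacle is finding $h$ for which both the synchronization property and the finite check actually hold. Squares of short words have internal periodicity, which tends to create overlaps across block boundaries. For example, $h(0)h(0)$ never arises because $\mathbf{x}$ is squarefree, but sequences like $h(0)h(1)h(0)$ must avoid producing $azaza$ at the junctions.

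If a uniform morphism with $|x_a|=2$ fails, the fallback is to search, by computer, over longer $x_a$ (lengths $3$ to $6$) until the finite check passes. Once it passes, we give the synchronization lemma with its short case analysis.
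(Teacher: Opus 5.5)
\paragraph{There is a genuine gap: no working morphism is ever produced.} Everything rests on a morphism $h(a)=x_ax_a$ that maps squarefree words to overlap-free words, and you leave its existence to a computer search you do not carry out.

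The one candidate you name fails immediately. We have $h(0)h(1)=0101\,0202$, which contains the overlap $01010$; likewise $h(1)h(2)=0202\,1212$ contains $21212$. Choosing the squarefree word cleverly does not avoid this. Every infinite squarefree word over $\{0,1,2\}$ contains each factor $ab$ with $a\ne b$: if $01$ were avoided, every $0$ would force $0212$ followed by either $1$ (giving $2121$) or $0212$ (giving $(0212)^2$). In general, $x_ax_a$ followed by the first letter of $x_a$ is an overlap, and so is $x_ax_a$ preceded by the last letter of $x_a$. Hence the first letters of $x_0,x_1,x_2$ must be pairwise distinct, and so must the last letters; your choice violates this.

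Your synchronization step is also too loose for blocks of the form $x_ax_a$. Each block has internal period $\ell$, so periods that are odd multiples of $\ell$ are not excluded by a generic ``no shifted occurrence'' claim and need their own argument.

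\paragraph{The missing idea is that no search is needed.} The case $\ell=1$, i.e.\ $h(a)=aa$, already works, and this is exactly what the paper uses. Then $h(\mathbf{w})$ is trivially a concatenation of squares, and $aaa$ cannot occur in it because $\mathbf{w}$ has no factor $aa$. So suppose $h(\mathbf{w})$ contains an overlap $axaxa$ with $|x|\ge 1$.
\begin{itemize}
\item If $|ax|$ is even, then $axax$ (when the overlap starts at an even position) or $xaxa$ (when it starts at an odd position) is aligned with the blocks. It therefore equals $h(tt)$ for a nonempty factor $tt$ of $\mathbf{w}$, contradicting squarefreeness.
\item If $|ax|$ is odd, the first two occurrences of $a$ in the overlap lie at positions of opposite parity.
\begin{itemize}
\item Starting at an even position forces $x$ to begin and end with $a$, so $xax$ contains $aaa$.
\item Starting at an odd position forces the letter before the overlap and the first letter of $x$ to be $a$, which again gives $aaa$.
\end{itemize}
\end{itemize}
This short parity argument is the entire verification your outline lacks. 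With it the proof is complete; without it, what you have is a research plan whose first concrete step fails.
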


\begin{proof}
It is well-known that
there are uncountably many infinite squarefree
words $\bf w$
over the alphabet $\{0,1,2\}$.
For example, see \cite[Theorem 1.8]{Bean&Ehrenfeucht&McNulty:1979}.
Define a morphism $h$ as follows:
$h(a) = aa$ for $a \in \{0,1,2\}$.
Clearly ${\bf z} := h({\bf w})$ is a concatenation
of squares.  We claim that $\bf z$ is overlap-free.

Suppose, contrary to what we want to prove, that ${\bf z}$ has
an overlap $axaxa$.  Clearly $\bf z$ cannot contain $aaa$, so
assume $|x| \geq 1$.

\bigskip

\noindent{\it Case 1:}  $|ax|$ is even.
If the starting position of $axaxa$ in $\bf z$ is even, it is
aligned with the $h$-blocks in $\bf z$.
Then there exists a nonempty word $t$ such that $h(t) = ax$ and
$tt$ is a factor of $\bf w$, a contradiction.

If the starting position of $axaxa$ is odd, then it is offset
by $1$ from the $h$-blocks in $\bf z$.
Then the same argument applies with $h(t) = xa$.

\bigskip

\noindent{\it Case 2:}  $|ax|$ is odd.  
If the starting position of $axaxa$ in $\bf z$ is even,
then the starting position of the second $axa$ is odd.
Since the effect of $h$ is to double letters, this forces
the last letter of $x$ to be $a$, and the first letter of
$x$ to be $a$.  Thus $\bf z$ contains $aaa$, a contradiction.

If the starting position of $axaxa$ in $\bf z$ is odd, then
the symbol in one position to the left must also be $a$.
Then the second $axa$ starts at an even position, which
forces the first letter of $x$ to be $a$.   Thus again $\bf z$
contains $aaa$, a contradiction.

\bigskip

\noindent Hence the desired result follows.
\end{proof}

\section*{Acknowledgments}

Theorem~\ref{main} was conjectured by the author on November 29 2023.
The general idea of how to prove it was clear at that
time, but some details remained murky.
Some suggestions were obtained during an extended discussion with
ChatGPT 5.5 Pro in May 2026.
The arguments it suggested have been checked, re-organized,
and rewritten by the author.


\begin{thebibliography}{10}

\bibitem{Allouche&Shallit:2003}
J.-P. Allouche and J.~Shallit.
\newblock {\em Automatic Sequences: {T}heory, Applications, Generalizations}.
\newblock Cambridge University Press, Cambridge, 2003.

\bibitem{Bean&Ehrenfeucht&McNulty:1979}
D.~A. Bean, A.~Ehrenfeucht, and G.~McNulty.
\newblock Avoidable patterns in strings of symbols.
\newblock {\em Pacific J. Math.} {\bf 85} (1979), 261--294.

\bibitem{Berstel:1995}
J.~Berstel.
\newblock {\em Axel {Thue's} Papers on Repetitions in Words: a Translation}.
\newblock Number~20 in Publications du Laboratoire de Combinatoire et
  d'Informatique {Math\'ematique}. Universit\'e du Qu\'ebec \`a Montr\'eal,
  February 1995.

\bibitem{Currie&Rampersad:2010}
J.~Currie and N.~Rampersad.
\newblock Infinite words containing squares at every position.
\newblock {\em RAIRO Inform. Th\'eor. App.} {\bf 44} (2010), 113--124.

\bibitem{Karhumaki&Shallit:2004}
J.~Karhum{\"{a}}ki and J.~Shallit.
\newblock Polynomial versus exponential growth in repetition-free binary words.
\newblock {\em J. Combin. Theory. Ser. A} {\bf 105}(2) (2004), 335--347.

\bibitem{Kfoury:1988b}
A.-J. Kfoury.
\newblock Square-free and overlap-free words.
\newblock In G.~Mirkowska and H.~Rasiowa, editors, {\em Mathematical Problems
  in Computation Theory}, Vol.~21 of {\em Banach Center Publications}, pp.
  285--297. PWN -- Polish Scientific Publishers, Warsaw, 1988.

\bibitem{Restivo&Salemi:1985a}
A.~Restivo and S.~Salemi.
\newblock Overlap free words on two symbols.
\newblock In M.~Nivat and D.~Perrin, editors, {\em Automata on Infinite Words},
  Vol. 192 of {\em Lecture Notes in Computer Science}, pp.  198--206.
  Springer-Verlag, 1985.

\bibitem{Restivo&Salemi:1985b}
A.~Restivo and S.~Salemi.
\newblock Some decision results on nonrepetitive words.
\newblock In A.~Apostolico and Z.~Galil, editors, {\em Combinatorial Algorithms
  on Words}, pp.  289--295. Springer-Verlag, 1985.

\bibitem{Shelton&Soni:1985}
R.~O. Shelton and R.~P. Soni.
\newblock Chains and fixing blocks in irreducible binary sequences.
\newblock {\em Discrete Math.} {\bf 54} (1985), 93--99.

\bibitem{Thue:1912}
A.~Thue.
\newblock {\"Uber} die gegenseitige {Lage} gleicher {Teile} gewisser
  {Zeichenreihen}.
\newblock {\em Norske vid. Selsk. Skr. Mat. Nat. Kl.} {\bf 1} (1912), 1--67.
\newblock Reprinted in {\it Selected Mathematical Papers of Axel Thue}, T.
  Nagell, editor, Universitetsforlaget, Oslo, 1977, pp.~413--478.

\end{thebibliography}
\end{document}